\def\CC{{\mathbb C}}
\def\sm{{\setminus}}            
\def\bc{{\rm bc}}                    
\theoremstyle{plain}
\newtheorem{Th}{Theorem}[section]
\newtheorem{Prop}{Proposition}[section]
\newtheorem{Cor}{Corollary}[section]
\newtheorem{Ax}{Axiom}
\newtheorem{Le}{Lemma}[section]
\newtheorem{Rem}{Remark}[section]
\theoremstyle{definition}
\newenvironment{proof}{\par\noindent{\bf Proof.}}
{\hfill$\scriptstyle\blacksquare$}
\newcommand{\btu}{\bigtriangleup}
\def\CC{{{\mathbb{C}}}}
\def\CC{{\mathbb C}}
\date{September 7, 2021}
\title{Transition polynomial as a weight system for binary delta-matroids \thanks{The article was prepared within the framework of the Academic Fund Program at the National Research University Higher School
of Economics (HSE) in 2020–2021 (grant 20-04-010) and supported within the framework of a subsidy granted
to the HSE by the Government of the Russian Federation for the implementation of the Global Competitiveness Program.}}
\author{Alexander Dunaykin \thanks{International Laboratory of Cluster Geometry National Research University Higher School of Economics, email: {\tt
      alexdunaykin@gmail.com}}, Vyacheslav Zhukov \thanks{International Laboratory of Cluster Geometry National Research University Higher School of Economics, email: {\tt slava.zhukov@list.ru}}}
\begin{document}
\def\figurename{Fig.}

\maketitle
\renewcommand\abstractname{Abstract}
\renewcommand\refname{References}

\begin{abstract}

  To a singular knot $K$ with $n$ double points, one can associate a chord diagram with $n$ chords.
  A chord diagram can also be understood as a $4$-regular graph endowed with an oriented Euler circuit.
  L.~Traldi introduced a polynomial invariant for such graphs,
  called a transition polynomial.
  We specialize this polynomial
  to a multiplicative weight system, that is, a function on chord diagrams satisfying
  $4$-term relations and determining thus a finite type knot invariant.
  We prove a similar statement for the transition polynomial
  of general ribbon graphs and binary delta-matroids defined by
  R.~Brijder and H.~J.~Hoogeboom, which defines, as a consequence,
  a finite type invariant of links.

\end{abstract}

{\bf Перевод аннотации}

Особому узлу $K$ с $n$ двойными точками сопоставляется хордовая диаграмма с $n$ хордами. Хордовую диаграмму можно также понимать как 4-регулярный граф с выделенным ориентированным эйлеровым циклом. Л.~Тральди ввел инвариант
таких графов, называемый многочленом переходов.
Выбирая специальные параметры, мы превращаем этот многочлен в
весовую систему, то есть функцию на хордовых диаграммах, которая удовлетворяет четырёхчленному соотношению, а значит определяет инвариант узлов конечного типа. Аналогичное утверждение мы доказываем и для
многочлена переходов общих вложенных графов и бинарных дельта-матроидов,
введенного Р.~Брийдером и Х.~Хугебумом, определяя, тем самым, инвариант
зацеплений конечного типа.

{\bf Short title}

Transition polynomial as a weight system

{\bf List
  of key words or phrases}

knot, link, finite type invariant of knots, chord diagram, transition polynomial, delta-matroid.

{\bf 2010 Mathematics Subject Classification}

05C31  	Graph polynomials

\section{Introduction}

A \emph{chord diagram of order~$n$} is an oriented circle
with~$2n$ distinct points on it, split into~$n$ disjoint pairs and considered up to
orientation preserving diffeomorphisms of the circle. A function on chord diagrams is
a \emph{weight system} provided it satisfies Vassiliev's $4$-term relations (see precise definitions
in the next section). Vassiliev has shown that functions on chord diagrams with~$n$ chords
obtained from knot invariants of order at most~$n$ satisfy the $4$-term relations,
and Kontsevich proved that these are essentially the only restrictions, that is,
a knot invariant is associated to each weight system.

We start with the following construction.

First, by contracting each chord of a chord diagram to a vertex, we make the diagram
into a $4$-regular graph, that is, a graph in which all the vertices are $4$-valent.
The set of vertices of this graph is
in one-to-one correspondence with the set of chords in the initial chord diagram.
The graph is also endowed with a distinguished oriented Euler circuit, which is the supporting
circle of the chord diagram.

To such a pair (namely, a $4$-regular graph with an oriented Euler circuit), L.~Traldi associates
the weighted transition polynomial. This polynomial depends on three parameters,
denoted $s,t$, and~$u$. Our first main result consists in showing that for $u=-t$ the
weighted transition polynomial is a weight system (taking values in the polynomial
ring $\CC[s,t,x]$, the variable~$x$ being the argument of the transition polynomial).

A chord diagram also can be interpreted as an embedded graph
with a single vertex. More generally, to a singular link one associates
an embedded graph with several vertices, whose number equals the
number of connected components of the link.
Our second main result is that the transition polynomial for delta-matroids
defined in~\cite{BRHO} satisfies, after the same specialization,
the $4$-term relations for binary
delta-matroids introduced in~\cite{lando2016delta} and defines
thus a finite type invariant of links.

The paper is organized as follows.

In Section~\ref{s2} we introduce the required definitions and formulate the main result for chord diagrams.
Section~\ref{s3} is devoted to its proof. Section~\ref{s4}
is devoted to the construction of an extension
of the transition polynomial to arbitrary embedded graphs. We finish with
constructing an extension of our invariant to binary delta-matroids.

The authors are grateful to Sergei Lando for the advice to study the theory of Vassiliev knot invariants and for pointing out
the construction of Lorenzo Traldi in connection with them. This article wouldn't appear without his advice. The authors also are grateful to unknown referee for useful comments
allowing them to seriously improve presentation.

\section{Definitions and statement of the main result}\label{s2}

\subsection{Chord diagrams and weight systems}

A chord diagram of order~$n$ is an oriented circle
with~$2n$ distinct points on it, split into~$n$ disjoint pairs and considered up to
orientation preserving diffeomorphisms of the circle.
A function~$f$ on chord diagrams with values in a commutative ring
is called a {\it weight system\/} if it satisfies the {\it $4$-term relations\/}
shown in Fig.~\ref{4term}. Here we pick a chord diagram~$C$ and two chords
with neighboring ends in it, and construct the other three diagrams as shown.
All the four circles are assumed to be oriented counterclockwise.
The four diagrams in the picture may contain other chords with the ends
on the dotted arcs, which are the same for all four of them. An equivalent way to look at this is to consider functions
on the vector space $M$ spanned by all chord diagrams over the field $\CC$ factored over all $4$-term relations. The vector space
$M$ has a ring structure. In order to multiply two chord diagrams, $C_1$ and $C_2$, we cut the supporting circle of each
diagram at an  arbitrary point different from the endpoints of the chords and glue the resulting arcs together to
form a new supporting circle in an orientation-preserving way as it is done in Fig.~\ref{chordmult}.
Modulo $4$-term relations, the result does not depend on the way we have chosen the cutting points.
For the basics
of Vassiliev knot invariants we refer the reader to
Chapter~6 in the book by S. Lando and A. Zvonkin \cite{embedded graphs}.

\begin{figure}[h]
  \center{\includegraphics[width=0.65\linewidth]{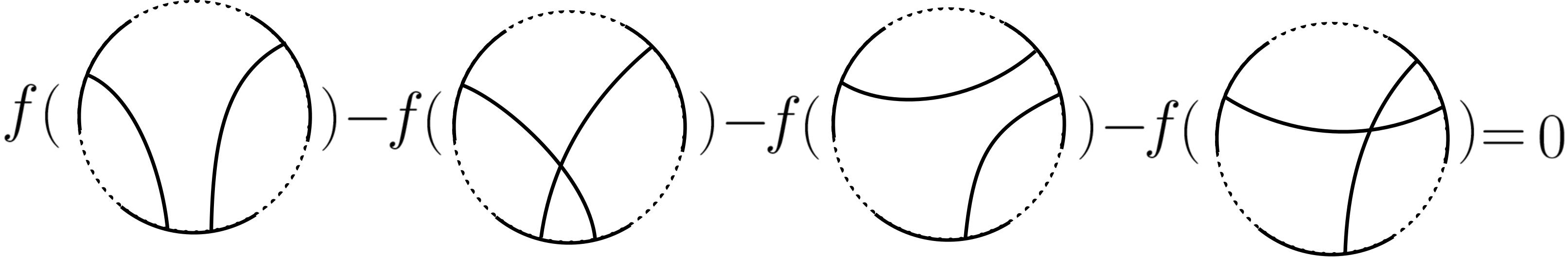}}
  \caption{4-term relation}
  \label{4term}
\end{figure}

\begin{figure}[h]
  \begin{picture}(400,60)(-70,0)
    \includegraphics[width=0.7\linewidth]{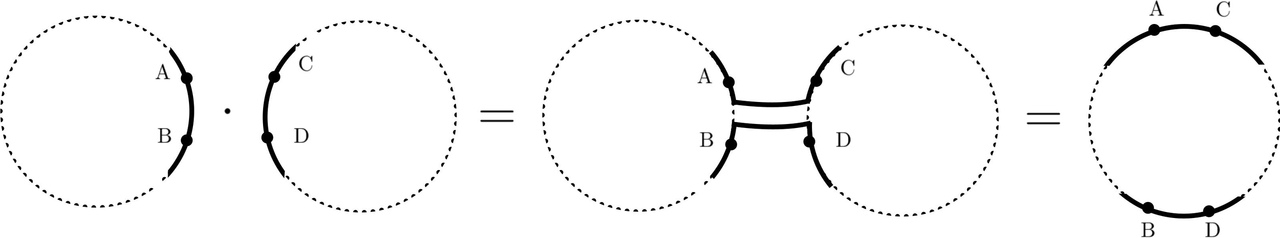}
  \end{picture}
  \caption{Multiplication of chord diagrams}
  \label{chordmult}
\end{figure}

\subsection{4-regular graphs, Greek labelings, and weighted tran\-si\-tion polynomials}

As discussed above, a 4-regular graph $G$ is a graph in which each vertex is 4-valent. By contracting the chords of a chord diagram $C$, we make it into
a $4$-regular graph $G=G(C)$ endowed with an oriented Euler circuit.
In this paper, we will look at an oriented Euler
circuit from two points of view. Firstly, we may interpret an Euler circuit as a sequence of half-edges $h_1$,
$h_2$, ... , $h_{4n}$ considered up to cyclic permutations of its entries, where $n$ is the number of vertices in $G$, such that

1. Each half-edge enters the sequence once and two half-edges
with consecutive indices either belong to the same
edge or are incident to the same vertex.

2. If $h_k$ and $h_{k+1}$ belong to the same edge, then $h_{k+1}$ and $h_{k+2}$ are incident to the same vertex.

3. If $h_k$ and $h_{k+1}$ are incident to the same vertex, then $h_{k+1}$ and $h_{k+2}$ belong to the same edge.

The second way to look at an oriented Euler circuit is to say that it is an immersion of the standard oriented
circle to $G$ such that each point of $G$ except the vertices has exactly one pre-image and each vertex has two
pre-images. This construction is considered up to homotopy in the class of such maps.

\subsubsection{Transitions and their Greek labeling}

Let $G$ be a 4-regular graph and let $K$ be an oriented Euler circuit in it. At each vertex $v$ of $G$, there are 4
half-edges incident to $v$. They form the set $H(v)$. There are three ways to split $H(v)$ into two disjoint
$2$-element subsets. These three partitions form the set $T(v)$, its elements are called the {\it transitions \/} at $v$.
The Euler circuit $K$ allows us to assign a type to any transition. We will mark the types with the Greek letters
$\phi$, $\chi$ and $\psi$.

Pick one of the two half-edges entering $v$ (we call this half-edge the {\it starting\/} one);
choosing a pair for this half-edge determines the transition completely.
There are three cases. If the pair to the starting half-edge is the one that follows it immediately along the Euler
circuit, then
we say that this transition belongs to type~$\phi$,
if it is the other leaving half-edge,
then this is a $\chi$-transition and if it is the other entering half-edge,
then this is a $\psi$-transition as  illustrated in Fig. \ref{symboling} (the letter `o' denotes
the starting half-edge).
Note that if we choose the other half-edge entering $v$ for the starting one,
then the types of the transitions will be the same.

\noindent
\begin{figure}
  \begin{picture}(400,60)(-100,0)
    \includegraphics[width=0.5\linewidth]{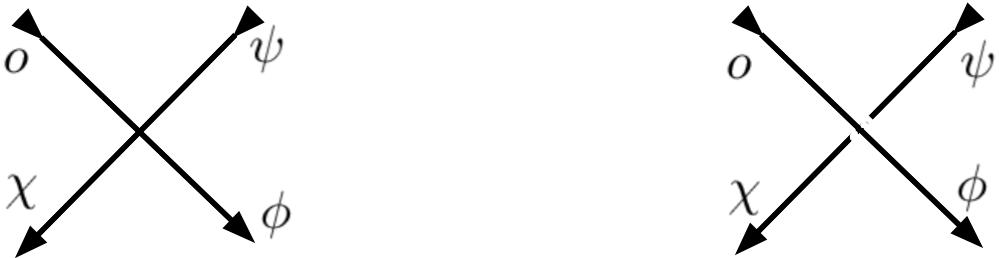}
  \end{picture}
  \caption{The Greek labeling of transitions (on the left) with respect to the specified Euler circuit (on
    the right)}
  \label{symboling}
\end{figure}

\subsubsection{Weighted transition polynomials}

A {\it circuit partition\/} $P$ of a 4-regular graph $G$ with~$n$ vertices is an $n$-tuple of transitions, one at
each vertex. Given a circuit partition $P$ of $G$,
we first erase all the vertices of $G$ and then we glue in pairs the free ends of
half-edges that were paired in some transition from $P$. Since we have taken one transition at each vertex of
$G$, each half-edge of $G$ participates exactly once in a transition from $P$ and we obtain a disjoint family of
circles. Let their number be $c(P)$.
Let ${\cal P}(G)$ be the set of all circuit partitions of $G$. The set of all transitions of $G$ is denoted by ${\cal T}(G)$. A
{\it weight function\/} is a map from ${\cal T}(G)$ to a commutative ring. For a given weight function~$w$,
the {\it weighted transition polynomial\/} $Q_w$ is the sum of the monomials that correspond to circuit partitions of $G$. The monomial for a given circuit
partition $P$ is $x^{c(P)-1}$ times the product of the weights of all transitions in $P$, so that
$$
Q_w(G)=\sum_{P \in P(G)} \prod_{v \in V(G)} w(T(v) \cap P) x^{c(P)-1}.
$$

The weighted transition polynomial was introduced by F. Jaeger \cite{transition polynomial}.

\subsection{Statement of the first main theorem}

If we define the weight function in such a way that it takes on a transition
values depending only on the type~$\phi,\psi$, or $\chi$ of the transition,
then we obtain Traldi's transition polynomial.
In this section, we introduce the function $Q$ taking chord diagrams to
elements of $\CC[s,t,x]$ as a specialization of Traldi's transition polynomial.
Its value on a chord diagram $C$ is the weighted transition polynomial
of the corres\-pon\-ding $4$-regular graph $G(C)$.
We attach to transitions in $G$ weights according to their types
with respect to $E(D)$. All the $\phi$-transitions are assigned the weight $s$,
all the $\psi$-transitions are assigned the weight $t$, and all the $\chi$-transitions are assigned the weight
$-t$.

\begin{Th}\label{tm1}
  The function $Q$ is a multiplicative weight system.
\end{Th}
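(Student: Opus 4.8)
The plan is to prove separately the two assertions in the statement: that $Q$ respects the $4$-term relations, hence descends to a function on the space $M$ of chord diagrams modulo these relations (a weight system), and that the induced map $M\to\CC[s,t,x]$ is a ring homomorphism. Throughout it is convenient to work on the $4$-regular-graph side, using that in the formula $Q(C)=Q_w(G(C))=\sum_{P\in{\cal P}(G(C))}\bigl(\prod_{v\in V(G)}w(T(v)\cap P)\bigr)x^{c(P)-1}$ the set ${\cal P}(G)$ of circuit partitions and the numbers $c(P)$ depend only on the abstract $4$-regular graph, the Euler circuit entering only through the weight $w$, which takes the values $s$, $t$, $-t$ on transitions of types $\phi$, $\psi$, $\chi$.

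Multiplicativity is the easier part. Fix $C_1,C_2$ and the cutting points used to form the product $C_1\cdot C_2$; each lies in the interior of an edge $e_i$ of $G(C_i)$, and $G(C_1\cdot C_2)$ is obtained from the disjoint union $G(C_1)\sqcup G(C_2)$ by cutting $e_1$ and $e_2$ there and reconnecting the four resulting arc-ends crosswise --- an operation that touches no vertex. Hence there is a canonical bijection ${\cal P}(G(C_1\cdot C_2))\cong{\cal P}(G(C_1))\times{\cal P}(G(C_2))$ under which the products of transition weights multiply, and one checks that the circuit of $P_1$ through $e_1$ and the circuit of $P_2$ through $e_2$ merge into one circuit while all other circuits are unaffected, so that $c(P)=c(P_1)+c(P_2)-1$. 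Therefore $Q(C_1\cdot C_2)=Q(C_1)Q(C_2)$; in particular the value does not depend on the cutting points, and combined with the weight-system property this makes $Q$ a ring homomorphism on $M$. The same computation gives $Q(G_1\sqcup G_2)=Q(G_1)Q(G_2)$ for arbitrary disjoint unions of $4$-regular graphs.

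The heart of the matter is the $4$-term relation, and this is where the specialization $u=-t$ is essential. Given a chord diagram and two chords with neighbouring ends as in Fig.~\ref{4term}, the four chord diagrams occurring in the relation coincide outside a small arc of the supporting circle; passing to $4$-regular graphs, the four graphs $G_1,\dots,G_4$ with their Euler circuits agree outside a small region $R$ carrying the two distinguished vertices and the finitely many edges incident to them. I would group the circuit partitions of each $G_i$ according to their transitions inside $R$ and their restriction to the common exterior. For a fixed exterior configuration --- which in particular fixes a pairing pattern of the half-edges crossing the boundary of $R$ and a number of entirely exterior circuits --- the contribution of $R$ to $Q_w(G_i)$ is, up to one and the same exterior weight and power of $x$, a polynomial in $x$ whose coefficients are monomials in $s$, $t$, $-t$ and which depends only on the interior of $R$. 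The $4$-term combination then decomposes into a sum, over exterior pairing patterns, of the $4$-term combinations of these local polynomials, and reduces to the vanishing of each local combination.

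That final vanishing is a finite verification --- over the finitely many pairing patterns on the boundary of $R$ and the finitely many transition choices at the two distinguished vertices --- and this bookkeeping is the main obstacle: in each case one must determine how the interior transitions reconnect the boundary half-edges into circuits, how many interior circuits result, and how the type-dependent weights change as the supporting circle is altered inside $R$. The point of the specialization is that, with $w(\psi)=t$ and $w(\chi)=-t$, the $\psi$- and $\chi$-type terms produced in these local polynomials cancel against one another in exactly the combination demanded by the $4$-term relation; for a generic value of the third parameter the identity fails, which is why $u=-t$ is forced. Carrying out this bounded computation completes the proof that $Q$ is a weight system, and together with the multiplicativity established above this proves that $Q$ is a multiplicative weight system.
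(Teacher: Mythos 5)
Your multiplicativity argument is correct and is in fact spelled out in more detail than in the paper: the canonical bijection ${\cal P}(G(C_1\cdot C_2))\cong{\cal P}(G(C_1))\times{\cal P}(G(C_2))$ together with $c(P)=c(P_1)+c(P_2)-1$ does give $Q(C_1\cdot C_2)=Q(C_1)Q(C_2)$ and independence of the cutting points. Your reduction of the $4$-term relation to a local statement is also the same first step the paper takes: fix the transitions (equivalently, the Greek letters) at all chords other than the two distinguished ones $a$ and $b$, and show that the resulting $4\times 9=36$-term combination vanishes for each such choice.

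However, at exactly that point your proof stops, and this is a genuine gap. The vanishing of the local combination is not routine bookkeeping that can be deferred as ``a bounded computation'': it is the entire mathematical content of the theorem, and it is precisely where the choice of weights $s$, $t$, $-t$ and the structure of the four diagrams enter. Your sentence that ``the $\psi$- and $\chi$-type terms cancel against one another in exactly the combination demanded by the $4$-term relation'' restates what must be proved rather than proving it. In particular you give no mechanism for comparing the circuit counts $c(P)$ across the four terms, which is the delicate point: the four diagrams carry different Euler circuits, so the same abstract transition at $a$ or $b$ receives different Greek labels (hence different weights) in different terms, and one must exhibit an explicit sign-reversing matching of the $36$ monomials under which the circuit counts agree. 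The paper does this by reinterpreting each circuit partition as a one-vertex ribbon graph $R(P)$ (erase the $\phi$-chords, attach plain ribbons for the $\chi$-chords and half-twisted ribbons for the $\psi$-chords), so that $c(P)$ becomes the number of boundary components of $R(P)$; the $36$ local terms are then paired into $18$ pairs of opposite sign (Fig.~\ref{4termpsi}) in which the two ribbon graphs are either homeomorphic or related by a handle slide and hence have equal boundary counts. Some such explicit case analysis is indispensable; without it your proposal does not establish the $4$-term relation.
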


\section{Proof of Theorem~\ref{tm1}}\label{s3}

Instead of counting the number of connected components in a circuit partition $P$ of a $4$-regular graph $G(C)$ with
an oriented Euler circuit $K=K(C)$,
we can count the number of connected components of the boundary of the
ribbon graph with one vertex corresponding to the chord
diagram~$C$ and the partition~$P$.
Let~$P$ be a circuit partition. Assign the corresponding Greek letters to the chords of the chord diagram~$C$;
such a marked chord diagram will be denoted by~$C(P)$. Associate to the marked chord diagram~$C(P)$ the ribbon
graph~$R(P)$
by attaching the disc to the supporting circle of~$C$ and
replacing every chord with marking~$\chi$ by a ribbon,
every chord with marking~$\psi$ by a half-twisted ribbon,
and erasing every chord with marking~$\phi$.

The value of the function $Q$ on a chord diagram with $n$ chords is a sum of $3^n$ monomials. Each monomial
corresponds to the choice of Greek letters at each of the chords.
Figure~\ref{onechord} shows how the value of~$\Psi$ on a chord diagram
is constructed, for a chosen chord and all its possible markings with the Greek
letters, assuming the markings on all the other chords are fixed.

\noindent
\begin{figure}
  \center{\includegraphics[width=0.65\linewidth]{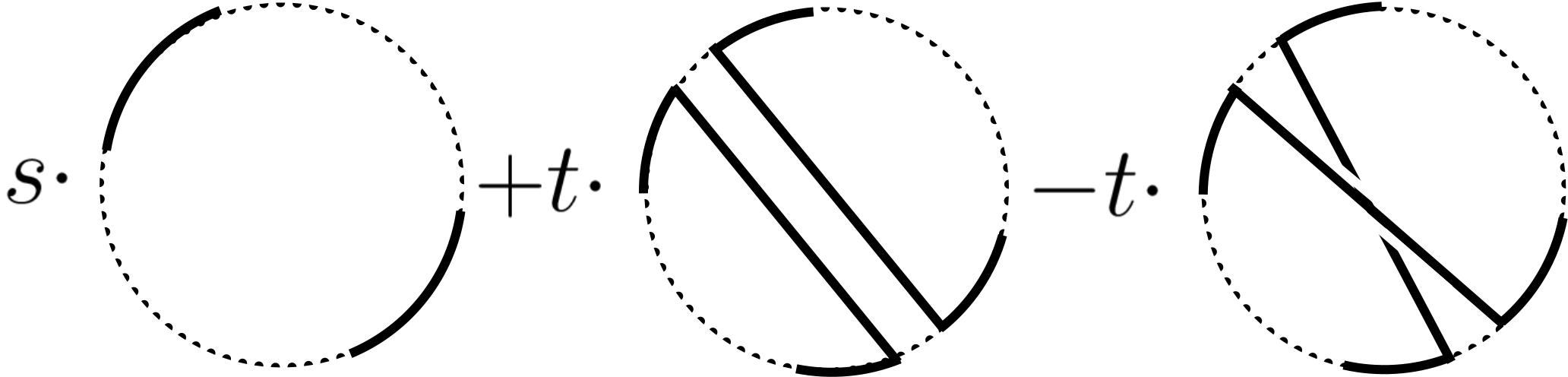}}
  \caption{Impact of the choices of the marks on a specific chord}
  \label{onechord}
\end{figure}

\begin{figure}
  \center{\includegraphics[width=\linewidth]{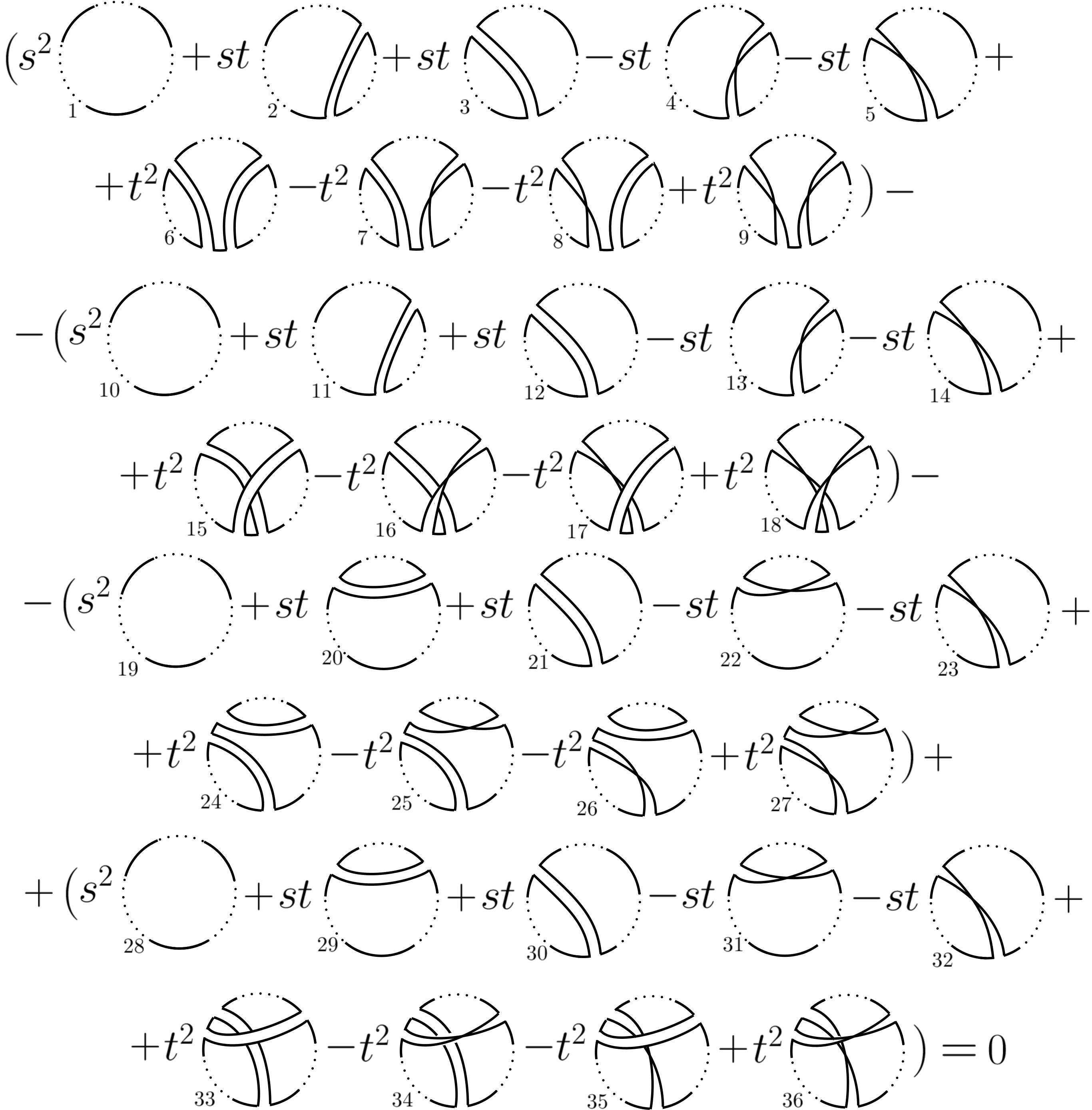}}
  \caption{Checking the $4$-term relation for $Q$}
  \label{4termpsi}
\end{figure}

In Fig.~\ref{4termpsi}, the $4$-term relation is checked. We take a chord diagram $C$ and pick
two chords $a$ and $b$ with neighboring ends in it.
We are going to show that the corresponding $4$-term relation is satisfied not just for
the whole function~$Q$, but for each subsum in it corresponding to a given
choice of Greek letters for all chords but~$a$ and~$b$.
Each bracket of the $4$-term expression contains 9 monomials in $Q$ and each monomial is
the product of the weights of $a$ and $b$ and $x^{c(P)-1}$ where $c(P)$ is the number of connected components of
the boundary of the ribbon graph associated to the
partition~$P$, times the product of the weights of all other chords, which are
the same for all the $4$ terms. The summands are numbered
(the number is shown in the brackets under the coefficient of the diagram with this number).
The paired summands below differ only by the sign:

\begin{gather*}
  (1,10); (2,11); (3,12); (4,13); (5,14); (6,24); (7,25);\\
  (8,36); (9,35); (15,33); (16,34); (17,27); (18,26);\\
  (19,28); (20,29); (21,30); (22,31); (23,32)
\end{gather*}

It is obvious that the two paired ribbon graphs in each pair are homeomorphic to one another, whence have the same number of boundary components.
Theorem~\ref{tm1} is proved.


\section{The extension of the $Q$-polynomial to ribbon graphs}\label{s4}

Above, we restricted our attention to
ribbon graphs with a single vertex in order to check the $4$-term relation for
the $Q$-function on chord diagrams. From now on we omit this restriction
and consider arbitrary ribbon graphs. We present a natural way to define the $Q$-polynomial
on a ribbon graph $R$ as an analogous specification of a transition polynomial for a medial graph of $R$. Similarly to the case of chord
diagrams, we attach a Greek letter to every ribbon (this data is denoted by $L$). Then we take the product of
weights of all Greek letters in $L$ ($s$ for $\phi$, $t$ for $\chi$ and $-t$ for $\psi$) and $x^{c(R(L))-1}$, where
$c(R(L))$ is the number of connected components of the boundary of the ribbon graph $R(L)$. The latter is constructed from
$R$ by half-twisting all the ribbons endowed with the letter $\psi$ and erasing all the ribbons endowed with the letter $\phi$.
The polynomial~$Q$ is then defined as the result of the summation over all states~$L$.

The {\it $4$-term relation for ribbon graphs \/} is shown in the upper row in Fig.~\ref{ribbon4}. Here we pick a ribbon graph $R$ and two ribbons with neighboring ends in it, and
construct the other three ribbon graphs as shown. The four ribbon graphs in the picture may contain
other ribbons, which are the same for all four of them.

\begin{figure}
  \center{\includegraphics[width=\linewidth]{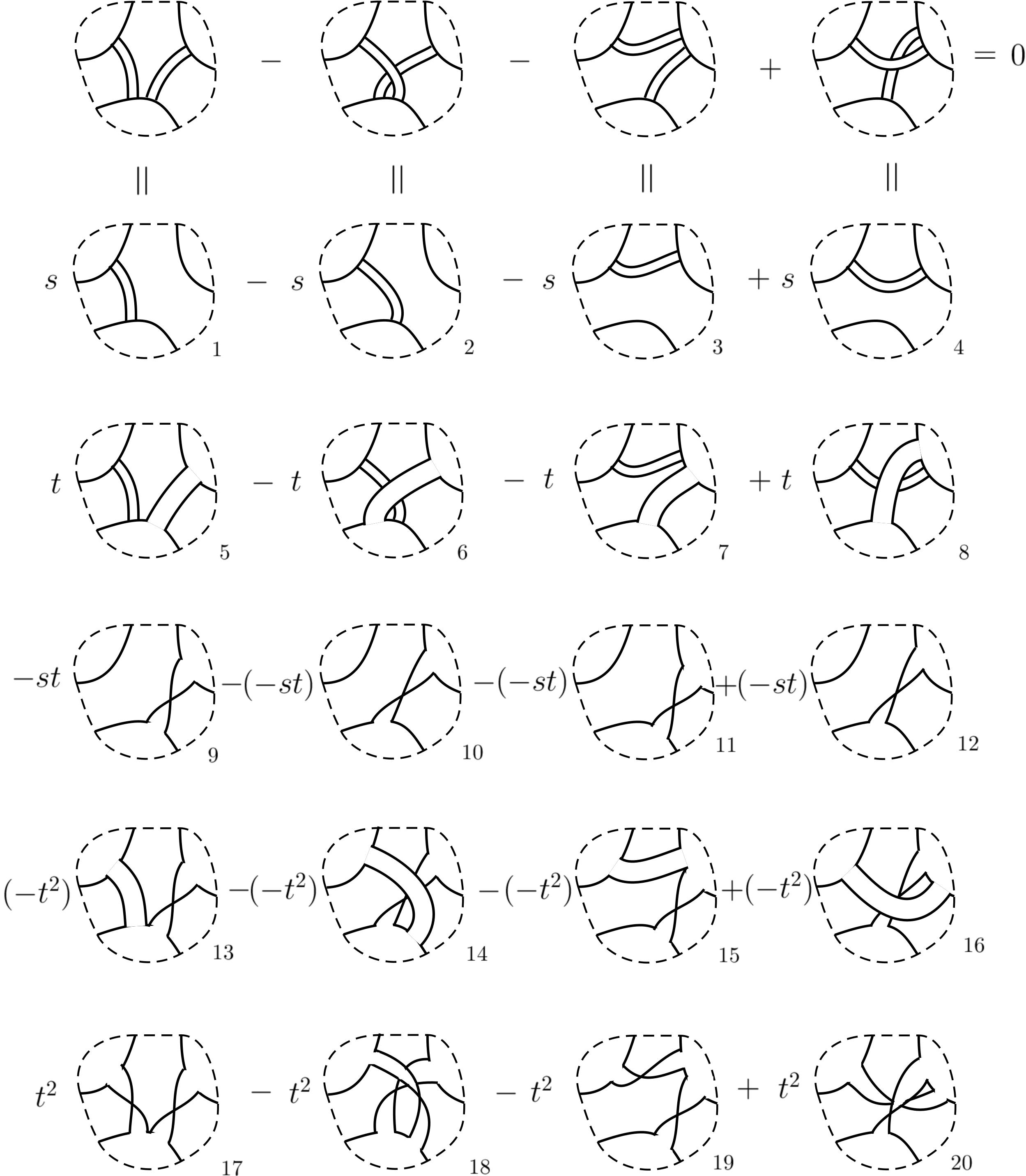}}
  \caption{Checking the 4-term relation for ribbon graphs}
  \label{ribbon4}
\end{figure}

The 4-term relation for our polynomial on ribbon graphs is checked in the same way as it was done for chord diagrams, see Fig. \ref{ribbon4}.

In more detail, each column in Fig.~\ref{ribbon4} represents an expression for the $Q$-polynomial
of a ribbon graph with two distinguished ribbons.
  Elements of the second and the third lines are obtained from the corresponding
  elements of the first line by deleting the second ribbon or contracting it, respectively.
  Elements of the fourth, fifth, and sixth lines are obtained from the corresponding elements of the
  first line by twisting and then contracting the second ribbon, while the first ribbon
is, respectively, deleted, contracted, or twisted and contracted.

The paired summands below differ only by sign:
  \begin{gather*}
    (1, 2); (3, 4); (5, 7); (6, 8); (9, 10); (11, 12); (13, 20);\\
    (14, 19); (15, 18); (16, 17); \\
  \end{gather*}
it is obvious that the two paired ribbon graphs in each pair are
  homeomorphic to one another, whence have the same number of boundary components.




\section{Weighted transition polynomial for binary
  delta-matroids}
\label{sec:weight-trans-polyn}

In this section we study the weighted transition polynomial for binary $\btu$-matroids.
The notions of a $\btu$-matroid and a binary $\btu$-matroid were introduced by Bouchet \cite{bouchet1989maps}.
Our presentation below follows that of \cite{lando2016delta}, where the $4$-term relations and
the Hopf algebra of binary delta-matroids were introduced.

Weighted transition polynomial~$Q$ for delta-matroids (and, more generally, for
arbitrary set systems) was defined in~\cite{BRHO}.
To each ribbon graph $R$, a binary delta-matroid $D(R)$ is associated.
In what follows, we always assume, without special indication, that the ribbon graphs in question are connected.
(Note that it does not make sense to consider disconnected ribbon graphs since they do not have quasitrees,
whence the corresponding delta-matroid is improper.)
The polynomial~$Q$
for binary delta-matroids possesses the property that
$Q(D(R))=Q(R)$.
We also prove that the 4-term relations for binary delta-matroids introduced in \cite{lando2016delta} are satisfied
for the transition polynomial.
We start with basic notions from the theory of delta-matroids.
Then, following~\cite{BRHO}, we define the transition polynomial for binary delta-matroids and its specification~$Q$, and
prove that it satisfies the $4$-term relations.

\subsection{Basics of delta-matroids}
\label{sec:basics-delta-matr}

A \emph{set system} is a pair $(E, S)$ where $E$ is a finite set and $S \subset 2^E$. The set $E$ is called the
\emph{ground set}
and elements of $S$ are called \emph{feasible sets}. Two set systems $(E_1, S_1)$ and $(E_2, S_2)$ are said to be isomorphic
if there exists a bijection $f: E_1 \rightarrow E_2$ such that $f(S_1)=S_2$. Below, we do not distinguish between isomorphic
set systems.



A \emph{delta-matroid} is a set system $(E, S)$, with a non-empty $S$, satisfying the following Symmetric Exchange Axiom:

\begin{Ax}[SEA]
  For any two feasible sets $X$ and $Y$ and any $a \in X \btu Y$ there is $b \in X \btu Y$ {\rm(}which is allowed to be equal to $a${\rm)} such that $X \btu \{a, b\}$ is feasible {\rm(}in the case $b=a$,  $X\Delta\{a\}\in S${\rm)}.
\end{Ax}

Here and below $\btu$ denotes the symmetric difference operation on
pairs of sets.

\label{sec:basics-delta-matr-1}
To any ribbon graph $R$, we assign a delta-matroid $D(R)=(E(R),S(R))$.
Here $E(R)$ is the set of edges of $R$ and the feasible sets are those subsets of $E$ that induce a ribbon subgraph whose boundary
consists of a single connected component ({\it quasitrees}).

To a simple graph $G$, the delta-matroid $D(G)=(E(G), S(G))$ is associated. The ground set $E(G)$
is the set of vertices of $G$, $E=V(G)$. A subset $A\subset E(G)$ is
feasible if the adjacency matrix of the subgraph of~$G$ induced by $A$ is nondegenerate over ${\mathbb F}_2$ (and empty set is feasible by definition). A delta-matroid $D$ is
said to be \emph{graphic\/} if there exists a graph $G$ such that $D=D(G)$.

Let $(E,  S)$ be a $\btu$-matroid and let $A\subset E$; then
the \emph{partial duality} $(E, S)* A$ of $(E,  S)$ \emph{by the set} $A$ is defined by
$(E, S)* A = (E, \{F\subset E | F\btu A\in  S\})$.
{\rm(}If $A$ is a one-element set, $A=\{a\}$,  we simply write
$(E, S)* a$ instead of $(E,  S)*\{a\}${\rm)}.

A delta-matroid $D$ is said to be \emph{binary\/} if there exists a graphic delta-matroid $D{'}=(E, S)$ and a set $A\subset E$
such that $D=D{'}* A=(E, S)* A$.

\begin{Rem}
  For a ribbon graph $R$, the delta-matroid $D(R)$
  is a {\em binary delta-matroid}.
\end{Rem}

The following statement shows, in particular, that the delta-matroid of a ribbon graph with
a single vertex coincides with the delta-matroid of the intersec\-tion graph of the corresponding chord diagram.

\begin{Th}
  \label{sec:basics-delta-matr-2}
  Let $C$ be a chord diagram and let $\Gamma(C)$ be its intersection graph,
  $A(\Gamma(C))$ being its adjacency matrix over ${\mathbb F}_2$, then
  $corank(A(\Gamma(C))) = \bc(C)-1$ where $\bc(C)$ is the number of boundary components 
  of $C$.
\end{Th}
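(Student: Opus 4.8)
The statement relates a linear-algebra quantity (the corank of the $\mathbb{F}_2$-adjacency matrix of the intersection graph) to a topological quantity (the number of boundary components of the chord diagram viewed as a one-vertex ribbon graph). The natural route is to realize both sides through the same combinatorial object: the gluing of bands prescribed by the chords. Recall that a chord diagram $C$ with $n$ chords, with its supporting disc, is a one-vertex ribbon graph in which each chord becomes a band; its boundary $\partial C$ is a disjoint union of $\bc(C)$ circles, and the surface it bounds has first Betti number $2n + 1 - \bc(C)$ over $\mathbb{F}_2$ if the surface is nonorientable in general (more precisely, Euler characteristic considerations give $\chi = 1 - n$ for the spine, and capping the boundary recovers the closed surface). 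So the goal reduces to showing $\mathrm{rank}\,A(\Gamma(C)) = n - (\bc(C) - 1)$, i.e. that the rank of the adjacency matrix equals $2n$ minus the rank of the $\mathbb{Z}_2$-homology... — rather, I will argue directly that $\bc(C) - 1 = \mathrm{corank}\,A(\Gamma(C))$ by exhibiting an explicit isomorphism between the kernel of $A(\Gamma(C))$ and a space naturally of dimension $\bc(C)-1$.

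\medskip

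\textbf{Step 1: set up the boundary as an $\mathbb{F}_2$-cycle space.} Label the chords $1,\dots,n$. The boundary of the ribbon graph $R(C)$ runs along arcs of the supporting circle and along the two sides of each band. Travelling along $\partial C$, each band is traversed twice; I will track, for each boundary component, the subset $S \subset \{1,\dots,n\}$ of chords whose bands it uses "crossing over" (as opposed to running alongside) — equivalently, I encode each boundary component by its $\mathbb{F}_2$-vector in $\mathbb{F}_2^n$ recording which chords it separates it from the base disc. A cleaner bookkeeping: fix the standard correspondence between boundary components of a one-vertex ribbon graph and the orbits of a certain permutation built from the chord endpoints along the circle. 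This gives $\bc(C)$ as the number of orbits of an explicit involution-product, hence a concrete handle on the first homology of the closure.

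\medskip

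\textbf{Step 2: identify $\ker A(\Gamma(C))$ with boundary relations.} The key linear-algebra fact is the classical interlacement/adjacency identity: for chord diagrams, a vector $v = (v_1,\dots,v_n) \in \mathbb{F}_2^n$ lies in $\ker A(\Gamma(C))$ if and only if the "doubled" curve obtained by smoothing exactly the chords in $\mathrm{supp}(v)$ (via the $\chi$-type smoothing) closes up consistently — more precisely, $A(\Gamma(C))\,v = 0$ expresses that for every chord $i$, the number of chords in $\mathrm{supp}(v)$ interlacing $i$ is even. I will show this is precisely the condition that the subset $\mathrm{supp}(v)$, as a subsurface, has all boundary circles "even", which matches a spanning set of $H_1$ of the capped surface. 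Concretely, I expect to produce a surjective linear map $\mathbb{F}_2^n \twoheadrightarrow (\text{boundary-component space})/(\text{one relation})$ whose kernel is exactly the image of $A(\Gamma(C))$, and whose domain mod $\ker A$ has dimension $\mathrm{rank}\,A$; counting dimensions then yields $\mathrm{corank}\,A = \bc(C) - 1$. The "$-1$" comes from the single global relation that the sum of all boundary components is null-homologous.

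\medskip

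\textbf{Step 3: induction as a fallback / verification.} If the direct homological identification is awkward to make fully rigorous, I would instead induct on $n$. The base case $n = 0$ gives $\bc = 1$ and a $0\times 0$ matrix, corank $0$. For the inductive step, delete one chord $c$: this changes $\Gamma(C)$ by removing one vertex, and changes the ribbon graph by removing one band. Analyze how $\bc$ changes (it goes up or down by $1$, or stays, depending on whether $c$ is a "trivial loop", an "orientable handle", or a "nonorientable handle" relative to the rest), and match this against the change in corank of the adjacency matrix under deleting the corresponding row and column — using the standard fact about how corank of a symmetric $\mathbb{F}_2$-matrix changes under principal submatrix deletion, split into the cases $A_{cc} = 1$ (nonorientable, the $\psi$-type) versus $A_{cc}=0$ with $c$'s row zero / nonzero. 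The bookkeeping must exactly mirror the three-way case split already used in Fig.~\ref{4termpsi} of the present paper.

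\medskip

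\textbf{Main obstacle.} The delicate point is Step 2: correctly setting up the $\mathbb{F}_2$-linear dictionary between smoothings of the diagram and the adjacency matrix so that "$\ker A$" literally becomes "cycles in the capped surface". One must be careful that interlacement over $\mathbb{F}_2$ encodes the parity of crossings correctly and that the self-term $A_{cc}$ (which is $0$ for a chord diagram's intersection graph, since a chord does not interlace itself) is handled — in fact $A_{cc}=0$ throughout is what forces the relevant surface behavior and is why the answer is $\bc(C)-1$ rather than something shifted. Getting the single global relation (the source of the "$-1$") pinned down cleanly — rather than hand-waved — is where I expect to spend the most care.
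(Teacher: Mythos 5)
The paper does not prove this theorem at all: it is quoted as a known classical fact (essentially the Cohn--Lempel/Zulli identity between the interlacement matrix and the boundary of a one-vertex ribbon graph) and the text simply refers to \cite{mellor2000intersection}, \cite{moran1984chords}, \cite{soboleva2001vassiliev}. So the comparison here is between your outline and the literature, not an in-paper argument.

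As it stands, your proposal has a genuine gap: neither of your two routes is actually carried out, and the step you yourself flag as delicate is the entire content of the theorem. In Step 2 you assert that $v\in\ker A(\Gamma(C))$ ``if and only if the doubled curve obtained by smoothing the chords in $\mathrm{supp}(v)$ closes up consistently,'' and that there is a surjection $\mathbb{F}_2^n\twoheadrightarrow(\text{boundary components})/(\text{one relation})$ with kernel $\mathrm{im}\,A$ --- but this map is never defined and the claimed identification of its kernel is exactly the statement to be proved; restating the theorem in homological language is not a proof of it. The fallback induction in Step 3 is also unexecuted and partly miscalibrated for this setting: for a chord diagram all bands are untwisted, so $A_{cc}=0$ for every $c$ and the ``nonorientable handle / $\psi$-type / $A_{cc}=1$'' branch of your case split never occurs; moreover the associated surface is orientable, so deleting a band changes $\bc$ by exactly $\pm 1$ (it cannot ``stay''), while on the linear-algebra side you must show that deleting a row and column of a symmetric zero-diagonal $\mathbb{F}_2$-matrix drops the rank by $0$ or $2$ (parity of rank) and then match the two dichotomies --- i.e.\ show that the rank drops by $2$ precisely when the two ends of the deleted chord lie on distinct boundary components of the reduced diagram. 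That matching is the real work, and it is absent. To make this rigorous you should either carry out that induction in full, or encode the boundary components as orbits of the permutation built from the $2n$ chord endpoints and invoke (or reprove) the Cohn--Lempel equality, as is done in the cited sources.
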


Recall that the intersection graph $\Gamma(C)$ of a chord diagram~$C$
is the graph whose vertices are in one-to-one correspondence with
the chords of~$C$, two vertices being connected by an edge iff the ends
of the corresponding chords alternate along the circle. A proof of this theorem can be found in \cite{mellor2000intersection}, \cite{moran1984chords}, \cite{soboleva2001vassiliev}.

An element $a$ of a $\btu$-matroid $(E,S)$ is a \emph{coloop\/} if for each $F\in  S$ we have $F\ni a$,
and it is a \emph{loop\/} if for any $F\in  S$ we have $F\not\ni a$.
These definitions mimic ones for ribbon graphs, where a coloop is usually known as a {\it bridge}.

Let $(E,  S)$ be a $\btu$-matroid, and $a\in E$, then
$(E,  S)\sm a$ {\sl is the result of deleting $a$}:
\[
  (E,  S)\sm a=
  \begin{cases}
    (E\setminus\{a\}, \{F\subset E\setminus \{a\} | F\in  S\})&
    \text{if }a\text{ is not a coloop}\\
    (E\setminus\{a\}, \{F\subset E\setminus \{a\} | F\cup\{a\}\in  S\})&\text{otherwise}\\
  \end{cases}
\]

\noindent We denote by
$(E,  S) / a$ the result of contracting 
$a$:

\[
  (E,  S) / a=
  \begin{cases}
    (E\setminus\{a\}, \{F\subset E\setminus \{a\} | F\cup\{a\}\in  S\})&
    \text{if }a\text{ is not a loop}\\
    (E\setminus\{a\}, \{F\subset E\setminus \{a\} | F\in  S\})&\text{otherwise}\\
  \end{cases}
\]



For a delta-matroid \(D=(E,S)\), define the function~$d_D$ on the subsets of its ground set by the formula
\(d_D(A)=\min_{F\in S}|A\Delta F|\). In addition, we denote by \(d^0_D=d_D(\emptyset)\) the cardinality of a smallest feasible set.



\begin{Th}
  For a ribbon graph~$R$, the number
  $d^0_{D(R)}+1$ coincides with the number of vertices of~$R$.
\end{Th}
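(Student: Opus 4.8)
The plan is to compute $d^0_{D(R)}$ directly and show it equals $v(R)-1$, where $v(R)$ denotes the number of vertices (discs) of $R$; the statement $d^0_{D(R)}+1=v(R)$ then follows. Unwinding definitions, $d^0_{D(R)}=d_{D(R)}(\emptyset)=\min_{F\in S(R)}|F|$ is the least cardinality of an edge set $F\subseteq E(R)$ whose spanning ribbon subgraph $R(F)$ — the one with all vertices of $R$ and the ribbons in $F$ — has a single boundary component. I would prove the two inequalities $d^0_{D(R)}\le v(R)-1$ and $d^0_{D(R)}\ge v(R)-1$ separately.

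For the upper bound I would exhibit one feasible set of size $v(R)-1$. Since $D(R)$ is a delta-matroid, $S(R)$ is non-empty, which already forces $R$ to be connected, so $R$ has a spanning tree $T$, with $|T|=v(R)-1$. The point is that $R(T)$ is a disc and therefore has connected boundary, so $T\in S(R)$. I would justify this by a one-line Euler-characteristic count: $\chi(R(T))=v(R)-(v(R)-1)=1$ (each vertex disc contributes $+1$, each ribbon $-1$), so capping the $\partial$ boundary circles of $R(T)$ by discs yields a connected closed surface of Euler characteristic $1+\partial\le 2$; hence $\partial\le 1$, and since $R(T)$ is a compact surface with non-empty boundary, $\partial=1$. (Equivalently: a tree has no cycles, so every ribbon of $R(T)$ can be untwisted by flipping vertex discs one leaf at a time, exhibiting $R(T)$ as a thickened plane tree, i.e. a disc.) Thus $d^0_{D(R)}\le|T|=v(R)-1$.

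For the lower bound I would take an arbitrary feasible $F$ and show $|F|\ge v(R)-1$. First, if the spanning subgraph $(V(R),F)$ were disconnected, then $R(F)$ would be a disjoint union of at least two non-empty surfaces with boundary, each contributing at least one boundary circle, so $R(F)$ would have at least two boundary components, contradicting feasibility of $F$. Hence $(V(R),F)$ is a connected spanning subgraph of $R$ on $v(R)$ vertices, which forces $|F|\ge v(R)-1$.

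The two bounds give $d^0_{D(R)}=v(R)-1$, proving the theorem. I expect the only non-formal step to be the claim that the ribbon neighbourhood of a spanning tree is a disc — i.e., that twists along an acyclic set of ribbons can always be removed; once that is granted, the rest is routine Euler-characteristic bookkeeping together with the elementary fact that a connected graph has at least as many edges as vertices minus one.
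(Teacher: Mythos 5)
Your proof is correct, and it takes a genuinely more self-contained route than the paper. The paper disposes of the theorem in two lines by invoking the compatibility of partial duality with the delta-matroid construction, $D(R)\ast A=D(R\ast A)$ (cited from Chmutov), and observing that $R\ast A$ has one vertex exactly when $A$ is a minimal feasible set, the minimum being attained on a spanning tree. You prove the same equality $d^0_{D(R)}=v(R)-1$ directly from the definition of $S(R)$: the upper bound via the fact that the ribbon neighbourhood of a spanning tree is a disc (your Euler-characteristic argument is clean and complete --- $\chi=1$, capping $\partial$ boundary circles gives a connected closed surface of characteristic $1+\partial\le 2$, hence $\partial=1$; the leaf-flipping untwisting is a valid alternative), and the lower bound via the elementary observation that a feasible set must span connectedly, since a disconnected spanning ribbon subgraph has at least two boundary circles. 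The one convention you rely on --- that ``the subgraph induced by $F$'' means the \emph{spanning} ribbon subgraph $(V(R),F)$ --- is indeed the standard one and the only one under which the statement is true, so making it explicit is a point in your favour. What the paper's approach buys is brevity and a conceptual link to partial duality that is used again later in the section (in the definition of $\bc(D)$); what yours buys is independence from the cited duality theorem and a complete argument for why a spanning tree is feasible and minimal, which the paper leaves implicit.
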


\begin{proof}
  This statement follows from the fact that $D(R)*A$ is the delta-matroid of $R*A$ where $R*A$ is
  the partial duality of $R$ by the set $A$, see~\cite{generalized duality}. In order to obtain a ribbon graph with one vertex, we need
  to take for~$A$ a set containing a spanning tree on the vertices of $R$. The number of edges in this tree is one less than $d^0_{D(R)}$.
\end{proof}

The \emph{number $\bc(D)$ of connected components of the boundary of a delta-matroid $D=(E,  S)$} is the minimal $n \in
\mathbb N$ possessing the property that there exists a set $A\subset E$ of cardinality $n-1$ such that
$D*(E \setminus A)$ is a graphic delta-matroid.

\begin{Rem}
  It is easy to see that $d^0_{D}+1=\bc(D*E(D))$, where $E(D)$ is the ground set of $D$.
\end{Rem}

\begin{Cor}
  Let $R$ be a ribbon graph, then $\bc(R)=\bc(D(R))$.
\end{Cor}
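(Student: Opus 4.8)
The strategy is to route everything through the geometric dual $R^{*}=R*E(R)$ (partial duality with respect to the whole edge set) and to chain the relations already available: the formula $d^{0}_{D(\cdot)}+1=\#\{\text{vertices}\}$, the compatibility $D(R*A)=D(R)*A$ of partial duality with the delta-matroid, the Remark relating $d^{0}$ and $\bc$, and the fact that partial duality by the full ground set is an involution. The one genuinely topological input I would use is the classical fact that capping the boundary circles of a ribbon graph $R$ by discs produces exactly the vertices of the dual $R^{*}=R*E(R)$; in particular $\bc(R)$ equals the number of vertices of $R^{*}$.

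Granting this, the proof would proceed in four short moves. First I would apply the theorem $d^{0}_{D(R')}+1=\#V(R')$ to the ribbon graph $R'=R^{*}$, which is again connected, obtaining $\bc(R)=\#V(R^{*})=d^{0}_{D(R^{*})}+1$. Second, by the compatibility of the delta-matroid construction with partial duality — exactly the fact invoked in the proof of the vertices theorem, cf.~\cite{generalized duality} — one has $D(R^{*})=D(R*E(R))=D(R)*E(R)$. Third, I would apply the Remark to the delta-matroid $D'=D(R)*E(R)$, whose ground set is $E(R)$, so that $D'*E(D')=(D(R)*E(R))*E(R)$. Fourth, I would use that partial duality by the whole ground set is an involution, $(D(R)*E(R))*E(R)=D(R)*(E(R)\btu E(R))=D(R)*\emptyset=D(R)$, which rewrites $d^{0}_{D(R)*E(R)}$ (with the normalization of the Remark) as $\bc(D(R))$. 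Stringing the equalities together gives $\bc(R)=\bc(D(R))$.

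I expect the main obstacle to be the first step — making precise and justifying $\bc(R)=\#V(R*E(R))$ — since this is the only ingredient that is topological rather than a formal manipulation of the relations collected above; a secondary point requiring attention is keeping track of the $+1$-shifts when composing the vertices theorem, the Remark, and the involutivity of full partial duality, so that the normalizations match. As an alternative that bypasses the dual, I could unwind the definition of $\bc(D(R))$ directly: $D(R)*(E\sm A)=D(R*(E\sm A))$ is graphic if and only if $R*(E\sm A)$ has a single vertex — a graphic delta-matroid has $\emptyset$ feasible, $\emptyset\in S(D(R'))$ holds exactly when $R'$ has one vertex, and the converse is the remark following the intersection-graph theorem — and $R*(E\sm A)$ has one vertex if and only if $E\sm A\in S(D(R))$; minimizing $|A|+1$ subject to this then yields $\bc(D(R))=|E(R)|-\max_{F\in S(D(R))}|F|+1$, and the same quantity is obtained for $\bc(R)=\#V(R^{*})$ from the vertices theorem applied to $R^{*}$ together with $D(R^{*})=D(R)*E(R)$.
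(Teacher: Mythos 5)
Your proposal is correct and follows exactly the route the paper intends: the statement is presented as an immediate corollary of the theorem $d^{0}_{D(R)}+1=\#V(R)$ (applied to the dual $R^{*}=R*E(R)$), the compatibility $D(R*A)=D(R)*A$, and the Remark relating $d^{0}$ to $\bc$ of the full partial dual, which is precisely the chain you assemble. Your caution about the $+1$ normalization is warranted — as literally stated the Remark is off by one (the chain closes up with $d^{0}_{D}+1=\bc(D*E(D))$), and your direct computation $\bc(D(R))=|E(R)|-\max_{F\in S}|F|+1$ handles this correctly.
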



Let~$D=(E,S)$ be a binary delta-matroid, and let $a,b\in E$.

The result of {\it sliding of the element~$a$ over the element~$b$\/}
is the set system $\widetilde D_{ab}=(E;\widetilde S_{ab})$, where
$\widetilde S_{ab}= S\Delta\{A\sqcup\{a\}| A\sqcup\{b\}\in S
\text{ and } A\subset E\sm\{a,b\}\}$.

This definition was given in \cite{moffatt2017handle}
and interpreted as the \emph{second Vassiliev move} in \cite{lando2016delta}.

The result of {\it exchanging the ends of the ribbons~$a,b$}
is the set system $D'_{ab}=(E; S'_{ab})$, where $ S'_{ab}=\widetilde{( S*b)}_{ab}*b$,
and this is the \emph{first Vassiliev move}.

\begin{Rem}[see \cite{lando2016delta} Proposition 4.5]
    The following statements about the Vassiliev moves are valid:
    \begin{itemize}
    \item the first Vassiliev move is an involution: \(({D'_{ab}})'_{ab}=D\);
    \item the second Vassiliev move is an involution: \(\widetilde{(\widetilde D_{ab})}_{ab}=D\);
    \item the first and the second Vassiliev moves commute: \((\widetilde D_{ab})'_{ab}=\widetilde{(D'_{ab})}_{ab}\)
    \end{itemize}
  \end{Rem}




\begin{Rem}\label{sec:basics-delta-matr-3}
  If 
  \(a\) is a coloop, then $\widetilde S_{ab}= S$, and $ S'_{ab}= S$.
\end{Rem}




\subsection{Transition polynomial for binary delta-matroids}
\label{sec:trans-polyn-binary}

In order to define the transition polynomial for binary delta-matriods,
we need two more operations.

Let \(D=(E,  S)\) be a \(\btu\)-matroid, and let \(u\in E\) be an element of its ground set. Then let us define the
{\em loop complementation} \(D+u\) of \(D\) on \(u\) by the formula
\(D+u=(E,  S\Delta\{F\cup u| F\in  S, u\not\in F\})\).

Below, operations on set systems are assumed to be applied from left to right,
so that, for example, \(M + u \setminus  u * v\) means \(((M + u) \setminus u) * v.\)

Define the {\em dual pivot} \(D\overline{*}u\) of a $\btu$-matroid~$D$ with respect
to an element~$u$ by
\(D\overline{*}u=D+u*u+u=D*u+u*u\).
Similarly, for a subset $A\subset E$ of the ground set, we set
\(D\overline{*}A=D+A*A+A=D*A+A*A\).



The following definition is a specialization of the definition
of weighted transition polynomial for delta-matroids in~\cite{BRHO}.

For a \(\Delta\)-matroid \(D\), we define its {\em transition polynomial} $Q(D)$ (with parameters \(s, t, -t\)) as

\[
  Q(D)=\sum_{E(D)=\Phi \sqcup X \sqcup \Psi} s^{|\Phi|} t^{|X|} (-t)^{|\Psi|}
  x^{d^0_{D+\Phi*X\overline{*}\Psi}},
\]
where summation is carried over all disjoint partitions of the ground
set~$E$ of~$D$ into three parts.

Our main result for delta-matroids is the following statement.

\begin{Th}\label{theorem-4-term-for-d-matroids}
  For an arbitrary binary \(\Delta\)-matroid \(D=(E, S)\) and arbitrary elements $a,b\in E$ in its ground set, we have
  \begin{equation}
    Q(D)-Q(D'_{ab})-Q( \widetilde D_{ab})+Q(\widetilde D'_{ab})=0\label{eq:2}
  \end{equation}
\end{Th}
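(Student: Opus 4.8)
The plan is to follow the proof of Theorem~\ref{tm1} as closely as possible and to verify~\eqref{eq:2} ``locally'' in the Greek labeling. First I would fix a partition $E\setminus\{a,b\}=\Phi'\sqcup X'\sqcup\Psi'$ of the remaining ground set --- that is, a choice of one of the operations $+u$ (weight $s$), $*u$ (weight $t$), $\overline{*}u$ (weight $-t$) for every element $u$ other than $a$ and $b$ --- and show that the part of the left-hand side of~\eqref{eq:2} carried by the $3^2$ labelings extending this partial one already vanishes. This uses the (routine, and contained in~\cite{BRHO,moffatt2017handle}) facts that $+u,*u,\overline{*}u$ attached to distinct elements commute with one another, that they commute with the Vassiliev moves $\widetilde D_{ab}$ and $D'_{ab}$ whenever $u\notin\{a,b\}$, and that all of these preserve binarity; along the way one also checks that $\widetilde D'_{ab}$ is unambiguous, i.e.\ $(\widetilde D_{ab})'_{ab}=\widetilde{(D'_{ab})}_{ab}$. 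Putting $\widehat D:=D+\Phi'*X'\,\overline{*}\,\Psi'$ (leaving $a,b$ untouched), these facts pull the $\Phi',X',\Psi'$ operations outside each of the four moves, so the sub-sum in question equals $s^{|\Phi'|}t^{|X'|}(-t)^{|\Psi'|}$ times $Q_{ab}(\widehat D)-Q_{ab}(\widehat D'_{ab})-Q_{ab}(\widetilde{\widehat D}_{ab})+Q_{ab}(\widetilde{\widehat D}'_{ab})$, where $Q_{ab}$ is the transition polynomial with the summation restricted to the nine ways of assigning one of $+,*,\overline{*}$ to each of $a$ and $b$. Hence it is enough to prove that this alternating sum of $36$ signed monomials vanishes for every binary delta-matroid $\widehat D$ and every $a,b$ in its ground set.

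I would then deal with the degenerate case first: if $b$ is a loop or a bridge of $\widehat D$, then (by the proof of Remark~\ref{sec:basics-delta-matr-3} and its loop analogue, both of which follow at once from the definitions) the four set systems $\widehat D$, $\widehat D'_{ab}$, $\widetilde{\widehat D}_{ab}$, $\widetilde{\widehat D}'_{ab}$ coincide, so that the alternating sum $Q_{ab}(\widehat D)-Q_{ab}(\widehat D'_{ab})-Q_{ab}(\widetilde{\widehat D}_{ab})+Q_{ab}(\widetilde{\widehat D}'_{ab})$ vanishes identically. In the remaining case I would match the $36$ monomials into $18$ cancelling pairs, in direct analogy with the list $(1,10);(2,11);\dots$ in the proof of Theorem~\ref{tm1}. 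Each pair joins a monomial coming from one of the four delta-matroids to a monomial coming from another; for the pair to cancel one needs (a) that the two delta-matroids resulting from the prescribed operations on $a$ and $b$ have the same value of $d^0$ --- typically because they are equal as set systems, sometimes only because the single invariant $d^0$ agrees --- and (b) that the two signed coefficients are opposite, the sign being produced by the explicit minus signs attached to $\widehat D'_{ab}$ and $\widetilde{\widehat D}_{ab}$, by the weights $t$ and $-t$ of the operations $*$ and $\overline{*}$ applied to one and the same element, or by a combination of the two.

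The real content of the argument is the collection of set-system identities behind (a). To obtain them I would expand the definitions of $\widetilde D_{ab}$, of $D'_{ab}=\widetilde{(D*b)}_{ab}*b$, and of $+a,*a,\overline{*}a$ (and the same with $b$) in terms of the decomposition of the family of feasible sets according to whether they contain $a$ and whether they contain $b$, keeping track of the local group $\langle +a,*a\rangle\cong S_3$ --- whose three transpositions are exactly $+a$, $*a$ and $\overline{*}a=+a\,*a\,+a=*a\,+a\,*a$ --- and determining how the genuinely two-element moves $\widetilde{(\cdot)}_{ab}$ and $(\cdot)'_{ab}$ intertwine with the generators acting on $a$ and on $b$. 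This intertwining is essentially the computation carried out in~\cite{moffatt2017handle} and~\cite{BRHO}, so a good part of it can be cited rather than reproved. Once this dictionary is written down, each of the eighteen cancellations is a short check, and summing over all partial labelings $\Phi'\sqcup X'\sqcup\Psi'$ yields~\eqref{eq:2}.

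I expect this last step to be the main obstacle. Because sliding and exchanging-ends each act on two elements simultaneously, the eighteen identities break into a number of cases depending on the nine choices of operations on $(a,b)$ and on the incidence pattern of $a$ and $b$ with the feasible sets of $\widehat D$; extra care is needed in the sub-cases in which $a$ or $b$ turns into a loop or a bridge \emph{after} some of the operations have been performed, since then several of the thirty-six monomials collide and the pairing must be re-arranged. No single step is deep, but assembling the cases consistently --- and in particular getting every sign right --- is where the work lies.
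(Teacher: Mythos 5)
Your first reduction --- fixing the labeling on $E\setminus\{a,b\}$ and using the commutativity of $+u$, $*u$, $\overline{*}u$ (for $u\notin\{a,b\}$) with the two Vassiliev moves to pull these operations outside the four terms --- is exactly the paper's first step (its Lemma~\ref{le-commutative}). After that, however, your proposal stops being a proof: the entire content of the theorem is now the vanishing of the alternating sum of $36$ signed monomials for an \emph{arbitrary} binary delta-matroid $\widehat D$, and you only announce a program of $18$ pairwise cancellations, explicitly conceding that carrying it out (tracking how $d^0$ changes under each of the nine assignments of operations to $a,b$ composed with the two moves, re-pairing when $a$ or $b$ becomes a loop or bridge after some operations, and getting every sign right) ``is where the work lies.'' None of these identities is actually established, and they are not routine: in the chord-diagram case the cancellations of Theorem~\ref{tm1} are seen geometrically, via homeomorphisms and handle slides of ribbon graphs, whereas for an abstract binary delta-matroid there is no picture to appeal to, and the behaviour of $d^0$ under $\overline{*}$ combined with sliding is precisely the hard part.

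The paper closes this gap with a further reduction that you are missing. By Proposition~\ref{st-dD}, deleting an element $u\notin\{a,b\}$ changes $d^0$ by $1$ if $u$ is a bridge and by $0$ otherwise; by Lemma~\ref{le-bridge} the property of $u$ being a bridge is the same for all four delta-matroids $D$, $D'_{ab}$, $\widetilde D_{ab}$, $\widetilde D'_{ab}$ and is unaffected by the operations indexed by elements of $\{a,b\}$; and by Proposition~\ref{prop:1} deletion commutes with the Vassiliev moves. Hence all $36$ exponents shift by one and the same constant under $D\mapsto D\setminus u$, so one may delete every element outside $\{a,b\}$ and it suffices to verify the identity for binary delta-matroids whose ground set is the two-element set $\{a,b\}$. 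Each of these is $D(R)$ for some ribbon graph $R$, and the identity then follows from the $4$-term relation already checked for ribbon graphs. Either supply this deletion-plus-realization argument, or actually carry out your $18$ cancellations in full generality; as written, the core of the theorem is asserted rather than proved.
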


The proof will require the following statement (Lemma~11 in~\cite{BRHO}).

\begin{Le}
  Let \(M=(E,S)\) be a set system, and let \(u\), \(v\in E\) such that \(u\not=v\).
  Then \(M + u \sm v =M \sm v + u\), \(M * u \sm v = M \sm v * u\), и \(M + u \sm u = M \sm u\).
\end{Le}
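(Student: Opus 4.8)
The plan is to prove each of the three identities by a direct comparison of feasible sets: since two set systems on the same ground set are equal precisely when they have the same collection of feasible sets, it suffices to fix an arbitrary candidate subset and show that it is feasible on the left-hand side if and only if it is feasible on the right-hand side. To this end I would first unwind each elementary operation into an explicit membership predicate. For a set system $M=(E,S)$ and a subset $G\subset E$: loop complementation gives $G$ feasible in $M+u$ iff exactly one of the conditions $G\in S$ and $(u\in G \text{ and } G\sm u\in S)$ holds; partial duality gives $G$ feasible in $M*u$ iff $G\btu u\in S$; and deletion retains the feasible sets avoiding the deleted element, with the bridge branch instead testing membership of $G\cup\{\text{deleted element}\}$. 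Writing the two sides of each identity in these terms reduces every claim to a Boolean identity in the single relation ``$\cdot\in S$''.

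For the identities $M+u\sm v=M\sm v+u$ and $M*u\sm v=M\sm v*u$ the essential point is that $u\neq v$ decouples the two operations: loop complementation and partial duality alter the feasibility of a set only through its $u$-coordinate (via $G\sm u$, resp.\ $G\btu u$), whereas deletion of $v$ only constrains the $v$-coordinate. After substituting the predicates above, both sides of each identity collapse to the same condition, namely $v\notin G$ together with the $+u$- (resp.\ $*u$-) predicate evaluated against $S$. One subtlety must be settled before this comparison is legitimate: the deletion of $v$ is defined by two different formulas according to whether $v$ is a bridge, so I would first verify that $+u$ and $*u$ preserve the bridge status of the distinct element $v$. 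This is a short check — any feasible set witnessing that $v$ is not a bridge in $M$ produces, after the relevant toggle or symmetric difference by $u$, such a witness in $M+u$ or $M*u$, and conversely — which guarantees that the same deletion branch is used on both sides.

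The identity $M+u\sm u=M\sm u$ is the one genuinely different case, because here the two operations act on the \emph{same} element and their commutation is not formal. The key observation is that the toggle family defining $M+u$ consists entirely of sets containing $u$, so the feasibility of any set $G$ with $u\notin G$ is left unchanged by $+u$. When $u$ is not a bridge in $M$, deletion keeps exactly the feasible sets avoiding $u$, and on such sets $M+u$ and $M$ agree, yielding the identity at once. When $u$ is a bridge in $M$, every feasible set contains $u$, so the toggle family is empty and $M+u=M$, whence the identity is trivial; one also checks that the bridge status of $u$ is the same in $M+u$ as in $M$, so the correct deletion branch is applied throughout.

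I expect the main obstacle to be bookkeeping rather than conceptual: keeping the bridge branch of the deletion operator aligned on the two sides of each equation, and combining the symmetric-difference and toggle predicates with the correct parities. Once the membership predicates are written out, each identity reduces to an elementary tautology, so essentially all of the work lies in setting up that translation carefully.
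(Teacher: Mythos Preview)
Your proposal is correct. The paper does not prove this lemma at all: it simply quotes it as Lemma~11 of Brijder--Hoogeboom~\cite{BRHO} and uses it as a black box, so there is no in-paper argument to compare against. Your direct verification---unwinding $+u$, $*u$, and $\setminus v$ into membership predicates, checking that $+u$ and $*u$ preserve the bridge status of a distinct element~$v$ so that the same deletion branch applies on both sides, and observing that the toggle family defining $+u$ consists only of sets containing~$u$ for the third identity---is exactly the elementary computation one would expect and goes through without difficulty.
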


\begin{Prop}[see Lemma 2.11 in \cite{chun2019matroids}]
  \label{st-dD}
  Let \(D\) be a  \(\Delta\)-matroid and \(u\) an element of its ground set, then
  \(d^0_{D}=d^0_{D\sm u} - 1\) if \(u\) is a coloop, and \(d^0_{D}=d^0_{D\sm u}\), otherwise.
\end{Prop}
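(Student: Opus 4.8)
The plan is to unfold the definition $d^0_D=\min_{F\in S}|F|$ (the cardinality of a smallest feasible set) and to treat the two branches of the deletion operation separately. The only nontrivial input beyond bookkeeping will be the Symmetric Exchange Axiom, which is needed to control how a smallest feasible set can be moved off the element $u$.

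First I would dispose of the bridge case. If $u$ is a bridge then every feasible set of $D$ contains $u$, and by definition the feasible sets of $D\sm u$ are the sets $G\subset E\sm\{u\}$ with $G\cup\{u\}\in S$. These are exactly the sets $F\sm\{u\}$ for $F\in S$, so $F\mapsto F\sm\{u\}$ is a bijection between $S$ and the feasible sets of $D\sm u$ that removes one element from each set. A smallest feasible set therefore corresponds to a smallest feasible set under this bijection, and the two minimal cardinalities differ by exactly one; this settles the bridge case.

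The non-bridge case is where the actual work lies. Now the feasible sets of $D\sm u$ are precisely those $F\in S$ with $u\notin F$, which form a subset of $S$, so $d^0_{D\sm u}\ge d^0_D$ is immediate. For the reverse inequality I would take a smallest feasible set $F$ of $D$; if $u\notin F$ it already witnesses $d^0_{D\sm u}\le d^0_D$, so assume $u\in F$. Since $u$ is not a bridge there is $G\in S$ with $u\notin G$, whence $u\in F\btu G$. Applying the Symmetric Exchange Axiom to $F$, $G$ and $a=u$ produces $b\in F\btu G$ with $F\btu\{u,b\}\in S$. A three-way case analysis on $b$ finishes the argument: $b=u$ would give the feasible set $F\sm\{u\}$ of cardinality $d^0_D-1$, and $b\in F$ (so $b\notin G$) would give $F\sm\{u,b\}$ of cardinality $d^0_D-2$, both contradicting the minimality of $|F|$; hence $b\notin F$, $b\in G$, and $(F\sm\{u\})\cup\{b\}$ is a feasible set of cardinality $d^0_D$ avoiding $u$. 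This yields $d^0_{D\sm u}\le d^0_D$ and therefore equality.

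The main obstacle is exactly this exchange step: the Symmetric Exchange Axiom alone does not say that $u$ can be removed without cost, and one has to use the minimality of the chosen feasible set to eliminate the two size-decreasing outcomes and retain only the size-preserving swap that trades $u$ for an element of $G$. The bridge case, by contrast, is pure cardinality bookkeeping through the explicit bijection $F\mapsto F\sm\{u\}$.
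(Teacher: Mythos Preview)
Your proof is correct and follows essentially the same approach as the paper: both arguments pick a smallest feasible set, handle the bridge case by the obvious bijection $F\mapsto F\setminus\{u\}$, and in the non-bridge case apply the Symmetric Exchange Axiom to $F$, a feasible set avoiding $u$, and the element $u$ to produce a feasible set of the same cardinality avoiding $u$. Your explicit three-way case analysis on the exchange element $b$ makes precise the step the paper summarizes as ``the cardinality of $F'$ cannot be less than that of $F$''.
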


\begin{Le} 
  \label{le-bridge}
  Let \(D\) be a binary \(\Delta\)-matroid and
  suppose \(a,b,u\in E(D)\) are pairwise distinct  elements of its ground set.
  Then if \(u\) is a coloop
  for one of the \(\Delta\)-matroids
  in the set  \(\{D, D*a, D+a, D\overline{*}a, D'_{ab}, \widetilde D_{ab}, \widetilde D'_{ab}\}\), then
  it is a coloop for all of them.
\end{Le}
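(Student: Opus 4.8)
The plan is to reduce the statement to a handful of elementary observations about how the operations $*$, $+$, $\overline{*}$, and the Vassiliev moves $\widetilde{(\cdot)}_{ab}$, $(\cdot)'_{ab}$ affect the membership of the element $u$ in feasible sets, given that $u$ is distinct from both $a$ and $b$. Recall that $u$ being a bridge of a $\btu$-matroid $(E,S)$ means $u\in F$ for every $F\in S$. First I would handle the two single-element operations on $a$: if $u\ne a$, then for any feasible set $F$ of $D*a$ one has $F\btu\{a\}\in S$, and since $a\ne u$ the sets $F$ and $F\btu\{a\}$ agree on whether they contain $u$; hence $u$ is a bridge of $D$ iff $u$ is a bridge of $D*a$. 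For the loop complementation $D+a$, the new feasible sets are obtained from $S$ by symmetric difference with a family of sets of the form $F\cup a$ with $F\in S$, $a\notin F$; again each such set agrees with the corresponding element of $S$ on membership of $u$ (as $u\ne a$), and symmetric difference of families does not create a feasible set avoiding $u$ unless one already existed among the $S$-sets with that property. This takes care of $D*a$, $D+a$, and hence of $D\overline{*}a=D+a*a+a$ by composing three such steps.

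Next I would treat the Vassiliev moves. For the sliding $\widetilde D_{ab}$, the symmetric-difference family is $\{A\sqcup\{a\}\mid A\sqcup\{b\}\in S,\ A\subset E\sm\{a,b\}\}$; since $u\notin\{a,b\}$ and $u\in A\sqcup\{a\}$ iff $u\in A\sqcup\{b\}$ iff $u\in A$, once more the move cannot introduce a feasible set avoiding $u$ that is not already matched by one in $S$, and conversely. So $u$ is a bridge of $D$ iff it is a bridge of $\widetilde D_{ab}$. For the end-exchange $D'_{ab}=\widetilde{(D*b)}_{ab}*b$ and for $\widetilde D'_{ab}$ (which is $\widetilde{(\cdot)}_{ab}$ applied to $D'_{ab}$), I would simply compose the cases already established: $*b$ preserves the bridge-status of $u$ (as $u\ne b$), then $\widetilde{(\cdot)}_{ab}$ does, then $*b$ again, and optionally $\widetilde{(\cdot)}_{ab}$ once more. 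Each arrow in the chain is an ``iff'', so the bridge-status of $u$ is constant along $D\rightsquigarrow D*a\rightsquigarrow D+a\rightsquigarrow D\overline{*}a$ and along $D\rightsquigarrow D*b\rightsquigarrow\widetilde{(D*b)}_{ab}\rightsquigarrow D'_{ab}\rightsquigarrow\widetilde D'_{ab}$, which covers all seven members of the set.

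The one point requiring a little care — and the step I expect to be the main (though modest) obstacle — is the behaviour of the bridge property under the symmetric difference of two families of sets, namely $S\btu T$ where $T$ is the auxiliary family in the definition of $+$ or of $\widetilde{(\cdot)}_{ab}$. The clean way to organize this is the equivalence: \emph{$u$ is a bridge of $(E,S)$ iff $S$ contains no set avoiding $u$}, i.e.\ iff $S\cap 2^{E\sm u}=\emptyset$; then note that $(S\btu T)\cap 2^{E\sm u}=(S\cap 2^{E\sm u})\btu(T\cap 2^{E\sm u})$, and in all our cases $T$ is built from $S$ in such a way that $T\cap 2^{E\sm u}$ is (the image under a bijection not touching $u$ of) exactly $S\cap 2^{E\sm u}$, so the symmetric difference of these two restricted families is empty iff $S\cap 2^{E\sm u}=\emptyset$. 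Once this bookkeeping identity is in place, every claim above is immediate, and the lemma follows.
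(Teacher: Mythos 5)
Your proposal is correct and follows essentially the same route as the paper: the paper's proof also reduces the claim, via involutivity, to checking the three elementary operations $D\mapsto D*a$, $D\mapsto D+a$, $D\mapsto\widetilde D_{ab}$ (declaring each ``obvious by definition''), and you simply supply the definition-chasing the paper omits. One minor imprecision: for $+a$ and for sliding, $T\cap 2^{E\sm u}$ is in bijection with a \emph{subfamily} of $S\cap 2^{E\sm u}$ (the sets additionally avoiding $a$, resp.\ containing $b$ and avoiding $a$), not with all of it -- but emptiness still transfers in the direction you need, and the reverse implication follows from involutivity, so the argument stands.
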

\begin{proof}
  It is easy to see that all the operations
  in the lemma are involutions, whence we have to prove   only sufficiency.

  It follows from the definitions of the operations that it suffices to prove the statement only for the operations
  \(D\mapsto D*a\), \(D\mapsto D+a\), and \(D\mapsto \widetilde{D}_{ab}\), 
  which is obvious by definition.
\end{proof}

The next proposition is an immediate corollary of Proposition~\ref{st-dD} and Lem\-ma~\ref{le-bridge}.
\begin{Prop}
  \label{prop:1}
  For an arbitrary binary delta-matroid \(D\) and pairwise distinct elements \(a,b,u\) in its ground set,
  the operation \(D\mapsto D\setminus u\) commutes with
  the first and the second Vassiliev moves on the elements
  $a,b$.
\end{Prop}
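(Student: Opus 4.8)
The plan is to verify the commutation separately for the second Vassiliev move $D\mapsto\widetilde D_{ab}$ and for the first Vassiliev move $D\mapsto D'_{ab}$; commutation with their composite $\widetilde D'_{ab}$ occurring in Theorem~\ref{theorem-4-term-for-d-matroids} is then automatic. I would first dispatch the first move by reducing it to the second. Since $S'_{ab}=\widetilde{(S*b)}_{ab}*b$ by definition, the identity $M*b\sm u=M\sm u*b$ of Lemma~11 of~\cite{BRHO} (applicable because $b\neq u$) together with the second-move case applied to the binary delta-matroid $D*b$ give, in order, $(D'_{ab})\sm u=(\widetilde{(D*b)}_{ab}*b)\sm u=(\widetilde{(D*b)}_{ab}\sm u)*b=\widetilde{((D*b)\sm u)}_{ab}*b=\widetilde{((D\sm u)*b)}_{ab}*b=(D\sm u)'_{ab}$, so it suffices to treat $D\mapsto\widetilde D_{ab}$.

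For the second move, recall $\widetilde S_{ab}=S\,\Delta\,X_D$ with $X_D=\{A\sqcup\{a\}\mid A\subset E\sm\{a,b\},\ A\sqcup\{b\}\in S\}$. By Lemma~\ref{le-bridge}, $u$ is a bridge for $D$ if and only if it is a bridge for $\widetilde D_{ab}$, so the two-branch definition of $\sm u$ selects the same branch on both sides of the desired identity $(\widetilde D_{ab})\sm u=\widetilde{(D\sm u)}_{ab}$, and I would argue by cases. If $u$ is not a bridge, $\sm u$ is the restriction $T\mapsto\{F\in T\mid u\notin F\}$; since $a\neq u$ and $b\neq u$, the condition $u\notin A\sqcup\{a\}$ (resp. $u\notin A\sqcup\{b\}$) is just $u\notin A$, so this restriction carries $X_D$ to $X_{D\sm u}$ and commutes with the symmetric difference, yielding the claim at once.

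It remains to handle the case in which $u$ is a bridge. Then every feasible set of $D$ — and, by Lemma~\ref{le-bridge}, of $\widetilde D_{ab}$ — contains $u$, and $\sm u$ amounts to applying the bijection $F\mapsto F\sm u$ to the feasible sets, exactly as in the proof of Proposition~\ref{st-dD}. One checks that this bijection carries $S\,\Delta\,X_D$ to $S(D\sm u)\,\Delta\,X_{D\sm u}$: the forced element $u$ lies in every set $A$ arising in $X_D$, and removing it identifies $X_D$ with $X_{D\sm u}$ and $S$ with $S(D\sm u)$, again using $u\notin\{a,b\}$; this gives $(\widetilde D_{ab})\sm u=\widetilde{(D\sm u)}_{ab}$ and completes the argument. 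I expect no conceptual obstacle here: the case distinction — a priori the delicate point — is settled by Lemma~\ref{le-bridge}, and what remains is the bookkeeping in the bridge case, i.e. aligning the two symmetric differences under $F\mapsto F\sm u$, for which the pairwise distinctness of $a,b,u$ is the only ingredient.
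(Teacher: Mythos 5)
Your proof is correct, and it rests on exactly the ingredient the paper points to: Lemma~\ref{le-bridge} guarantees that $u$ is a bridge for $D$ if and only if it is one for $\widetilde D_{ab}$ and $D'_{ab}$, so both sides of the desired identity take the same branch of the two-case definition of $\setminus u$, after which the set-level bookkeeping (restriction to sets avoiding $u$, resp.\ the bijection $F\mapsto F\setminus u$ on sets containing $u$) is routine. The paper itself offers no written argument --- it merely declares the proposition an immediate corollary of Proposition~\ref{st-dD} and Lemma~\ref{le-bridge} --- so your write-up, including the clean reduction of the first Vassiliev move to the second via $S'_{ab}=\widetilde{(S*b)}_{ab}*b$ and the identity $M*b\setminus u=M\setminus u*b$ from Lemma~11 of~\cite{BRHO}, supplies precisely the verification the paper leaves implicit.
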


\begin{Le}\label{le-commutative}
  For an arbitrary binary delta-matroid \(D\) and pairwise distinct elements \(a,b,u\) in its ground set,
  the operations  \(D\mapsto D* u\),
  \(D\mapsto D+ u\), \(D\mapsto D\overline* u\)  commute with
  the first and the second Vassiliev moves on the elements
  $a,b$.
\end{Le}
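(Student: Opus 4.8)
The plan is to reduce everything to commutation statements about the three ``elementary'' operations on set systems — partial duality $D\mapsto D*u$, loop complementation $D\mapsto D+u$, and the symmetric difference with a ``twisting set'' that appears in the definition of $\widetilde D_{ab}$ — and then assemble the claims for $D\overline*u$ and for the first Vassiliev move from these building blocks. The first Vassiliev move is $D'_{ab}=\widetilde{(D*b)}_{ab}*b$, a composition of $*b$, the second move $\widetilde{(\,\cdot\,)}_{ab}$, and $*b$ again; the dual pivot is $D\overline*u=D*u+u*u$. So if I can show that $D\mapsto D*u$ and $D\mapsto D+u$ each commute with the second Vassiliev move $D\mapsto\widetilde D_{ab}$ (for $u\notin\{a,b\}$), and that $*u$ and $+u$ commute with each other (which is standard, since $u\neq a,b$ but here $u$ is the index of the operation — one needs the mixed relations like Lemma~11 of~\cite{BRHO} when the indices differ, and the relation $D*u+u\neq D+u*u$ in general is what forces the careful bookkeeping), then every operation in the statement is a word in operations that pairwise commute with the two Vassiliev moves, and the lemma follows by concatenating commutation squares.

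First I would fix notation: write $a,b$ for the two elements on which the Vassiliev moves act and $u$ for the third, distinct element, and recall from the definitions that $\widetilde S_{ab}=S\,\Delta\,\{A\sqcup\{a\}\mid A\sqcup\{b\}\in S,\ A\subset E\setminus\{a,b\}\}$. The key computation is: for $u\notin\{a,b\}$, the operation $D\mapsto D*u$ commutes with $D\mapsto\widetilde D_{ab}$. This is essentially a set-theoretic identity — partial duality by $u$ replaces each feasible set $F$ by $F\,\Delta\,\{u\}$ (as membership: $F\in S*u \iff F\Delta\{u\}\in S$), and since $u\neq a,b$ the map $F\mapsto F\Delta\{u\}$ is a bijection commuting with the operations $A\sqcup\{a\}\leftrightarrow A\sqcup\{b\}$ that define $\widetilde S_{ab}$; the symmetric difference defining $\widetilde S$ is respected because $\Delta$ with $\{u\}$ is a bijection of $2^E$. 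The same argument, essentially verbatim, handles $D\mapsto D+u$: loop complementation by $u$ is $S\mapsto S\,\Delta\,\{F\cup u\mid F\in S,\ u\notin F\}$, and again everything is disjoint from the $\{a,b\}$-bookkeeping. I'd present these two as a single lemma with the bijection-of-$2^E$ argument given once.

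Next I would assemble $D\overline*u$: since $D\overline*u=D*u+u*u$ (using the stated identity $D+u*u+u=D*u+u*u$), and each of $*u,+u,*u$ commutes with both Vassiliev moves by the previous step, their composition does too — one just stacks three commutation squares. Similarly for the first Vassiliev move applied to a third set of indices — but here I should be careful: the statement is about $D\mapsto D*u$, $D\mapsto D+u$, $D\mapsto D\overline*u$ commuting with the first and second Vassiliev moves on $a,b$; the first move $D\mapsto D'_{ab}=\widetilde{(D*b)}_{ab}*b$ is a word in $*b$ and $\widetilde{(\cdot)}_{ab}$, and since $u\neq b$, the operation $*u$ (resp. $+u$, $\overline*u$) commutes with $*b$ (this is the mixed-index relation, e.g. Lemma~11 of~\cite{BRHO} gives $M*u*v=M*v*u$ type identities, and $*u$ vs $+b$ is the subtle one but with distinct indices it holds) and with $\widetilde{(\cdot)}_{ab}$ by the lemma above, so it commutes with the whole word.

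\textbf{The main obstacle} I expect is the mixed-index commutation of $+u$ (loop complementation) with $*b$ and with the twisting operation when the indices are distinct — loop complementation does \emph{not} commute with partial duality on the \emph{same} element (that non-commutativity is precisely why $D\overline*u$ is defined as a specific word), so one must verify that distinct indices genuinely remove the obstruction, which is where Lemma~11 of~\cite{BRHO} and a direct check of the defining formulas are needed. The cleanest route is to observe that each elementary operation $*u$, $+u$ acts on $S\subset 2^E$ ``locally'' at the coordinate $u$ — it is a bijection of $2^E$ of the form $F\mapsto \sigma_u(F)$ composed with a symmetric-difference modification that only inspects the $u$-coordinate of $F$ — whereas $\widetilde{(\cdot)}_{ab}$ and $*b$ only inspect the $a,b$-coordinates; operations touching disjoint coordinates commute, and spelling this out rigorously (rather than case-checking) is the real content.
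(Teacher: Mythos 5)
Your proposal is correct, and it follows the same general strategy as the paper — reduce everything to commutation of the elementary operations $*u$ and $+u$ with a single Vassiliev move and then assemble $\overline*u$ and the remaining move as words in commuting pieces — but the two arguments run the reduction in opposite directions and verify the key commutations differently. The paper observes that the \emph{second} move is the first one conjugated by $*b$, so it only checks $*u$ and $+u$ against the \emph{first} move, for which it writes down an explicit closed formula $S'_{ab}=S\,\Delta\,\{F\cup\{a,b\}\mid\dots\}$ and grinds through the symmetric differences. You instead take the second move $\widetilde{(\cdot)}_{ab}$ as the primitive, prove that $*u$ and $+u$ commute with it by the ``operators supported on disjoint coordinates commute'' argument (all of these maps are $\mathbb{F}_2$-linear on the space spanned by $2^E$, with $*u,+u$ acting only through the $u$-coordinate and $\widetilde{(\cdot)}_{ab}$ only through the $a,b$-coordinates), and then recover the first move as the word $*b\circ\widetilde{(\cdot)}_{ab}\circ *b$, which costs you the extra mixed-index relations $*u$ vs.\ $*b$ and $+u$ vs.\ $*b$ from Lemma~11 of Brijder--Hoogeboom. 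Your route buys a more conceptual and less error-prone verification (and correctly isolates the genuine subtlety, namely that $*$ and $+$ fail to commute only on a \emph{common} index); the paper's route is shorter on paper but leans on an explicit formula for $D'_{ab}$ that must itself be derived from the definition. To turn your sketch into a complete proof you only need to write out the linearity/disjoint-support argument once, as you indicate; there is no gap in the plan.
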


\begin{proof}
  Since the second Vassiliev move is a composition of the
  first one and the operation~$*$, it suffices to check
  commutativity of the operations  \(D\mapsto D* u\)
  and  \(D\mapsto D+u\) with the first Vassiliev move.

  {Introduce the characteristic function \(\chi_{D}: 2^E\to\mathbb{Z}/2\mathbb{Z}\)
    of a \(\Delta\)-matroid \(D\), which takes a subset \(F\) of its base set to \(1\)
     if \(F\) is admissible and to \(0\) otherwise. Clearly, $D$ is uniquely determined by~$\chi_D$.

Now, \[
      \chi_{D'_{ab}}(F)=
      \begin{cases}
        \chi_{D}(F) \text{\hspace{79pt} if } \{a, b\}\not\subset F \\
        \chi_{D}(F) + \chi_{D}(F\setminus\{a, b\}) \quad\text{ if } \{a, b\}\subset F
      \end{cases}
    \]
    \[
      \chi_{D+u}(F)=\begin{cases}
        \chi_{D}(F) \text{\hspace{79pt} if } \{u\}\not\subset F \\
        \chi_{D}(F) + \chi_{D}(F\setminus \{u\}) \quad\text{ if } \{u\}\subset F
      \end{cases}
    \]

    It is easy to see that \(\chi_{(D'_{ab})+u}=\chi_{(D+u)'_{ab}}\) and that for any
    \(F\) such that \(F\not\ni a\), \(F\not\ni b\), \(F\not\ni u\) we have
    \begin{equation*}
      \begin{split}
    & \chi_{(D+u)'_{ab}}(F)=\chi_{D}(F) \\
    & \chi_{(D+u)'_{ab}}(F\cup\{a\})=\chi_{D}(F\cup\{a\}) \\
    & \chi_{(D+u)'_{ab}}(F\cup\{b\})=\chi_{D}(F\cup\{b\}) \\
    & \chi_{(D+u)'_{ab}}(F\cup\{u\})=\chi_{D}(F\cup\{u\})+\chi_{D}(F) \\
    & \chi_{(D+u)'_{ab}}(F\cup\{a, b\})=\chi_{D}(F\cup\{a, b\})+\chi_{D}(F) \\
    & \chi_{(D+u)'_{ab}}(F\cup\{a, u\})=\chi_{D}(F\cup\{a, u\})+\chi_{D}(F\cup\{a\}) \\
    & \chi_{(D+u)'_{ab}}(F\cup\{b, u\})=\chi_{D}(F\cup\{b, u\})+\chi_{D}(F\cup\{b\}) \\
    & \chi_{(D+u)'_{ab}}(F\cup\{a, b, u\})=\chi_{D}(F\cup\{a, b, u\})+\chi_{D}(F\cup\{u\})
      +\chi_{D}(F)
    \end{split}
  \end{equation*}
  (where summation on the right is taken in $\mathbb{Z}/2\mathbb{Z}$).}

    For the  pair \(D'_{ab}\) and \(*u\),
    \begin{eqnarray*}
      (D*u)'_{ab}&=&(E, [\{F\Delta u| F\in  S\}]\Delta\{F\cup \{a,b\}| F\in [\{F\Delta u| F\in  S\}],  \{ab\}\cap F = \emptyset\})\\
                 &=&(E, [\{F| F\Delta u\in  S\}]\Delta\{F\cup \{a,b\}| F\in [\{F| F\Delta u\in  S\}],  \{ab\}\cap F = \emptyset\})\\
                 &=&(E, \{F\Delta u |F\in [ S\Delta{F'\cup\{a,b\}| F'\in  S}]\})\\
                 &=&D'_{ab}*u.
    \end{eqnarray*}
\end{proof}

Now we can prove Theorem~\ref{theorem-4-term-for-d-matroids}
\begin{proof}
  %
  Let us pick a pair of distinct elements \(a, b\)  in the ground set \(E\) and define
  the polynomial \(Q_{\{a, b\}}(D)\) as
  \begin{equation}
    Q_{\{a, b\}}(D)=\sum_{\{a, b\}=E_{\phi} \sqcup E_{\chi} \sqcup E_{\psi}} s^{|E_{\phi}|} t^{|E_{\chi}|} (-t)^{|E_{\psi}|}
    x^{d^0_{D+E_{\phi}*E_\chi\overline{*}E_{\psi}}}.
    \label{eq:4}
  \end{equation}
  Now, we have
  \begin{eqnarray*}
    Q(D)&=&\sum_{E(D)\sm \{a, b\}=\Phi\sqcup X\sqcup \Psi}s^{|\Phi|}t^{|X|}(-t)^{|\Psi|}Q_{\{a, b\}}(D+\Phi*X\overline{*}\Psi),\\
    Q(D'_{ab})&=&\sum_{E(D)\sm \{a, b\}=\Phi\sqcup X\sqcup \Psi}s^{|\Phi|}t^{|X|}(-t)^{|\Psi|}Q_{\{a, b\}}(D'_{ab}+\Phi*X\overline{*}\Psi),\\
    Q(\widetilde{D}_{ab})&=&\sum_{E(D)\sm \{a, b\}=\Phi\sqcup X\sqcup \Psi}s^{|\Phi|}t^{|X|}(-t)^{|\Psi|}Q_{\{a, b\}}(\widetilde{D}_{ab}+\Phi*X\overline{*}\Psi),\\
    Q(\widetilde{D}'_{ab})&=&\sum_{E(D)\sm \{a, b\}=\Phi\sqcup X\sqcup \Psi}s^{|\Phi|}t^{|X|}(-t)^{|\Psi|}Q_{\{a, b\}}(\widetilde{D}'_{ab}+\Phi*X\overline{*}\Psi)
  \end{eqnarray*}

  Lemma~\ref{le-commutative} implies the following presentations, where the summations are carried over all partitions of the set $\{a,b\}$ into triples of disjoint subsets
  $\{a,b\}=E_\phi\sqcup E_\chi\sqcup E_\psi$:
  \begin{eqnarray*}
    Q_{\{a, b\}}(D+\Phi*X\overline{*}\Psi)&=&Q_{\{a, b\}}({D_1})\\
                                             &=&\sum_{\{a, b\}=E_{\phi} \sqcup E_{\chi} \sqcup E_{\psi}} s^{|E_{\phi}|} t^{|E_{\chi}|} (-t)^{|E_{\psi}|}
                                                 x^{d^0_{{D_1}+E_{\phi}*E_\chi\overline{*}E_{\psi}}},\\
    Q_{\{a, b\}}(D'_{ab}+\Phi*X\overline{*}\Psi)&=&Q_{\{a, b\}}({D_1}'_{ab})\\
                                             &=&\sum_{\{a, b\}=E_{\phi} \sqcup E_{\chi} \sqcup E_{\psi}} s^{|E_{\phi}|} t^{|E_{\chi}|} (-t)^{|E_{\psi}|}
                                                 x^{d^0_{{D_1'}_{ab}+E_{\phi}*E_\chi\overline{*}E_{\psi}}},\\
    Q_{\{a, b\}}(\widetilde{D}_{ab}+\Phi*X\overline{*}\Psi)&=&Q_{\{a, b\}}(\widetilde{D_1}_{ab})\\
                                             &=&\sum_{\{a, b\}=E_{\phi} \sqcup E_{\chi} \sqcup E_{\psi}} s^{|E_{\phi}|} t^{|E_{\chi}|} (-t)^{|E_{\psi}|}
                                                 x^{d^0_{\widetilde{D_1}_{ab}+E_{\phi}*E_\chi\overline{*}E_{\psi}}},\\
    Q_{\{a, b\}}(\widetilde{D}'_{ab}+\Phi*X\overline{*}\Psi)&=&Q_{\{a, b\}}(\widetilde{D_1}'_{ab})\\
                                             &=&\sum_{\{a, b\}=E_{\phi} \sqcup E_{\chi} \sqcup E_{\psi}} s^{|E_{\phi}|} t^{|E_{\chi}|} (-t)^{|E_{\psi}|}
                                                 x^{d^0_{\widetilde{D_1'}_{ab}+E_{\phi}*E_\chi\overline{*}E_{\psi}}},
  \end{eqnarray*}
  where \(D_1=D+\Phi*X\overline{*}\Psi\).

  It is therefore sufficient to show that for any partition of the set \(E(D)\setminus\{a, b\}\) into disjoint sets \(\Phi\), \(X\) and \(\Psi\),
  the equation
  \begin{equation}
    Q_{\{a, b\}}(D_1)-Q_{\{a, b\}}({D_1}'_{ab})-Q_{\{a, b\}}(\widetilde{D_1}_{ab})+Q_{\{a, b\}}(\widetilde{D_1}'_{ab})=0\label{eq:1}
  \end{equation}
  holds.  By Proposition~\ref{prop:1},
  the latter equation is equivalent (for an arbitrary \(u\not\in\{a, b\}\)) to the equation

  \[
    Q_{\{a, b\}}(D_1\sm u)-Q_{\{a, b\}}((D_1\sm u)'_{ab})-Q_{\{a, b\}}(\widetilde{D_1\sm u}_{ab})+Q_{\{a, b\}}(\widetilde{(D_1\sm u)}'_{ab})=0
  \]

  Therefore, we need to prove~Eq.(\ref{eq:1})
  only for delta-matroids with the ground set  \(\{a, b\}\).
  For any such delta-matriod \(D\), there exists a ribbon graph \(R\) such that \(D=D(R)\), and the
  assertion follows from the one for ribbon graphs.
\end{proof}

\end{document}